%
%
%
%

\documentclass[11pt]{article}
\usepackage[a4paper]{anysize}\marginsize{1.5cm}{1.5cm}{1.5cm}{1cm}
\pdfpagewidth=\paperwidth \pdfpageheight=\paperheight
\usepackage{amsfonts,amssymb,amsthm,amsmath,eucal,tabu,url}
\usepackage{pgf}
 \usepackage{array}
 \usepackage{pstricks}
 \usepackage{pstricks-add}
 \usepackage{pgf,tikz}
 \usetikzlibrary{automata}
 \usetikzlibrary{arrows}
 \usepackage{indentfirst}
 \pagestyle{myheadings}


\theoremstyle{plain}
\newtheorem{thm}{Theorem}[section]
\newtheorem{theorem}[thm]{Theorem}

\newtheorem{proposition}[thm]{Proposition}

\theoremstyle{definition}

\newtheorem{remark}[thm]{Remark}

\newtheorem{thevarthm}[thm]{\varthmname}

\newenvironment{varthm*}[1]{\trivlist\item[]{\bf #1.}\it}{\endtrivlist}


\renewcommand\geq{\geqslant}

\renewcommand\leq{\leqslant}

\newcommand\be{\begin{eqnarray*}}
\newcommand\ee{\end{eqnarray*}}

\newcommand\newop[2]{\def#1{\mathop{\rm #2}\nolimits}}
\newop\log{log}
\newop\ord{ord}
\newop\Gal{Gal}
\newop\SL{SL}
\newop\Bl{Bl}
\newop\mult{mult}
\newop\mass{mass}
\newop\div{div}
\newop\codim{codim}
\newop\sing{sing}
\newop\vdim{vdim}
\newop\edim{edim}
\newop\Ass{Ass}
\newop\size{size}
\newop\reg{reg}
\newop\satdeg{satdeg}
\newop\supp{supp}
\newop\Neg{Neg}
\newop\Nef{Nef}
\newop\Nefh{Nef_H}
\newop\Eff{Eff}
\newop\Zar{Zar}
\newop\MB{MB}
\newop\MBxC{MB\mathit{(x,C)}}
\newop\NnB{NnB}
\newop\Bigg{Big}
\newop\Effbar{\overline{\Eff}}

\def\keywordname{{\bfseries Keywords}}%
\def\keywords#1{\par\addvspace\medskipamount{\rightskip=0pt plus1cm
\def\and{\ifhmode\unskip\nobreak\fi\ $\cdot$
}\noindent\keywordname\enspace\ignorespaces#1\par}}
\def\subclassname{{\bfseries Mathematics Subject Classification
(2000)}\enspace}
\def\subclass#1{\par\addvspace\medskipamount{\rightskip=0pt plus1cm
\def\and{\ifhmode\unskip\nobreak\fi\ $\cdot$
}\noindent\subclassname\ignorespaces#1\par}}

\begin{document}
\title{The orbifold Langer-Miyaoka-Yau inequality and Hirzebruch-type inequalities}
\author{Piotr Pokora}

\date{\today}
\maketitle
\par\vspace*{.001\textheight}{\centering \emph{To Professor Kamil Rusek, on the occasion of his 70th birthday.}\par}
\begin{abstract}
Using Langer's variation on the Bogomolov-Miyaoka-Yau inequality \cite[Theorem 0.1]{Langer} we provide some Hirzebruch-type inequalities for curve arrangements in the complex projective plane.
\keywords{curve configurations, orbifold Miyaoka-Yau inequality, orbifold Euler numbers}
\subclass{14C20, 32S22, 52C35}
\end{abstract}

\section{Introduction}
In 2003 A. Langer \cite{Langer} has shown the following beautiful variation on the classical Bogomolov-Miyaoka-Yau inequality for a normal surface $X$ with a boundary divisor $D$.

\begin{theorem}
Let $(X,D)$ be a normal projective surface with a $\mathbb{Q}$-divisor $D = \sum_{i}a_{i}D_{i}$ with $0 \leq a_{i} \leq 1$. Assume that the pair $(X,D)$ is log canonical and $K_{X}+D$ is $\mathbb{Q}$-effective. Then
\begin{equation}
\label{LMY}
(K_{X}+D)^{2} \leq 3e_{orb}(X,D).
\end{equation}
Moreover, if equality holds, then $K_{X}+D$ is nef.
\end{theorem}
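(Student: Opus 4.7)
The strategy I would follow is Miyaoka's algebraic approach via (semi)stable sheaves rather than Yau's analytic K\"ahler-Einstein method, adapted to the orbifold setting.

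First, I would take a log resolution $\pi\colon Y \to X$ of the pair $(X,D)$ so that the strict transform of $D$ together with the exceptional locus forms a simple normal crossings divisor $D_Y$. Because the pair is log canonical, writing $\pi^{*}(K_X+D) = K_Y + D_Y - E$ one may arrange that the coefficients of $D_Y$ lie in $[0,1]$ and that $E$ is effective and $\pi$-exceptional. Both $(K_X+D)^{2}$ and $e_{orb}(X,D)$ are then expressible on the smooth model $Y$ in terms of the components of $D_Y$, their fractional coefficients, and the combinatorics of their intersection strata.

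The central step is to attach to $(Y,D_Y)$ an orbifold (or parabolic) cotangent sheaf $\Omega^{1}_{Y}(\log D_Y,\underline{a})$ whose twisted Chern classes compute $(K_X+D)^{2}$ and $e_{orb}(X,D)$ respectively, and then to prove that this sheaf is generically semipositive with respect to a suitable polarization whenever $K_X+D$ is nef. Once generic nefness is in place, Langer's restriction theorems combined with a Bogomolov-type inequality applied to symmetric powers of this sheaf give the Chern-number inequality $c_{1}^{2}\leq 3 c_{2}$, which translates into $(K_X+D)^{2}\leq 3 e_{orb}(X,D)$. I expect this generic semipositivity in the full log-canonical range to be the main technical obstacle: the assumption $a_{i}\leq 1$ is used crucially here, since it prevents the associated orbifold tangent sheaf from acquiring too much positivity along the boundary and so keeps a Bogomolov-type bound available.

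To remove the nefness assumption, I would run the logarithmic minimal model program for surface lc pairs on $(X,D)$. Because $K_X+D$ is $\mathbb{Q}$-effective, the MMP terminates at a birational model $(X',D')$ with $K_{X'}+D'$ nef, where each elementary step does not decrease $(K+D)^{2}$ and does not increase $e_{orb}$ in a way that spoils the comparison; applying the nef case on $(X',D')$ and pulling the inequality back establishes the general statement. For the last assertion, equality $(K_X+D)^{2}=3e_{orb}(X,D)$ forces every inequality in this chain to be an equality, which excludes the existence of $(K_X+D)$-negative curves on $X$ and thus yields the nefness of $K_X+D$.
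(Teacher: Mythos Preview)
The paper does not contain a proof of this statement. It is quoted as a known result of Langer (\cite[Theorem~0.1]{Langer}) in the introduction and is then used as a black box, via inequality~(\ref{Langer1}), to derive the Hirzebruch-type inequalities of Section~2. So there is no ``paper's own proof'' to compare your proposal against.

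That said, your outline is broadly in the spirit of Langer's original argument: he, too, works with local orbifold Euler numbers defined analytically, reduces to a suitable model, and ultimately relies on a Bogomolov--Miyaoka-type Chern-class inequality in the orbifold setting. A couple of places in your sketch would need real work if you actually attempted it. First, the assertion that each step of the log MMP ``does not decrease $(K+D)^{2}$ and does not increase $e_{orb}$ in a way that spoils the comparison'' hides the entire difficulty: Langer's local orbifold Euler numbers are defined precisely so that they behave well under the relevant birational modifications, and establishing this behaviour is a substantial part of \cite{Langer}. Second, identifying $e_{orb}(X,D)$ with a second Chern class of an orbifold cotangent sheaf is subtle in the log canonical (as opposed to klt) range, where coefficients equal to $1$ and non-quotient singularities can appear; Langer handles this by his analytic definition of $e_{orb}$ and careful local computations rather than by a single sheaf-theoretic identification. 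Your sketch is a plausible roadmap, but neither this paper nor an answer here can substitute for the details in \cite{Langer}.
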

In the above formulae, $e_{orb}(X,D)$ denotes the global orbifold number for $(X, \sum_{i}a_{i}D_{i})$, i.e., 
$$e_{orb}(X,D) = e_{top}(X) - \sum_{i}a_{i}e_{top}(D_{i}\setminus \text{Sing}(X,D)) + \sum_{x \in \text{Sing}(X,D)} (e_{orb}(x,X,D)-1),$$
and by $e_{orb}(x,X,D)$ we denote the local orbifold Euler number at $x$ \cite[Definition 3.1]{Langer}. For us the most important property is that local orbifold Euler numbers are analytic in their nature. In the present note we would like to obtain some Hirzebruch-type inequalities for curve arrangements in the complex projective plane such that all irreducible components are smooth and have pairwise transversal intersection points, i.e., all singularities are \emph{ordinary} and locally look like $\{x^{k} = y^{k}\}$. For those singularities, Langer \cite[Theorem 8.7]{Langer} computed their local orbifold Euler numbers using lines in $\mathbb{C}^{2}$.
\begin{proposition}
\label{mult}
Let $L_{1}, ..., L_{n}$ be $n$ distinct lines in $\mathbb{C}^{2}$ passing through $0$. Set $D = \sum_{i=1}^{n}a_{i}L_{i}$, where $0 \leq a_{1} \leq ... \leq a_{n} \leq 1$, and $a = \sum_{i=1}^{n}a_{i}$. Then
\begin{displaymath}
e_{orb}(0;\mathbb{C}^{2}, D) = \left\{ \begin{array}{ll}
0 & \text{if } a > 2 \\
(1-a+a_{n})(1-a_{n}) & \text{if } 2a_{n} \geq a
\end{array} \right.
\end{displaymath}
and
$$e_{orb}(0;\mathbb{C}^{2}, D) \leq \bigg(1 - \frac{a}{2}\bigg)^{2}$$
if $2a_{n} \leq a \leq 2$.
\end{proposition}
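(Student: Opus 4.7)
The plan is to resolve the point by a single blow-up at the origin and then apply Langer's orbifold Chern-class machinery to the resulting snc pair. Denote by $\pi\colon\tilde X\to\mathbb{C}^2$ the blow-up of $0$ with exceptional divisor $E\simeq\mathbb{P}^1$, so that each line $L_i$ has a smooth strict transform $\tilde L_i$ meeting $E$ transversally at a single point $p_i$, with the $p_i$ pairwise distinct. The standard identities $\pi^{\ast}L_i=\tilde L_i+E$ and $K_{\tilde X}=\pi^{\ast}K_{\mathbb{C}^2}+E$ give the log pull-back
$$\pi^{\ast}(K_{\mathbb{C}^2}+D)=K_{\tilde X}+\tilde D+(a-1)E.$$

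If $a>2$ then the coefficient $a-1$ of $E$ exceeds $1$, so the pair is not log canonical at $0$, and following Langer's convention the local orbifold Euler number vanishes. This disposes of the first branch of the proposition.

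Assume from now on that $a\le 2$, which makes $(\tilde X,\tilde D+(a-1)E)$ snc and log canonical. Langer's local orbifold Euler number at $0$ is then assembled from a contribution supported along $E$ and contributions from the $n$ simple nodes $p_i$, each of which carries the weight pair $(a_i,a-1)$. Under the dominant-weight hypothesis $2a_n\ge a$ we have $a_n\ge\sum_{i<n}a_i=a-a_n$, so that $a_n$ is the unique maximal weight. The orbifold structure realizing $e_{orb}$ is then the cyclic cover adapted to $L_n$, which decouples $L_n$ from the remaining lines and reduces the computation to a smooth node with effective weights $a_n$ and $a-a_n$. Langer's node formula then gives
$$e_{orb}(0;\mathbb{C}^2,D)=(1-a_n)\bigl(1-(a-a_n)\bigr)=(1-a+a_n)(1-a_n),$$
proving the second assertion.

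In the remaining balanced regime $2a_n\le a\le 2$, no single weight dominates and the product formula is no longer optimal; one instead runs Langer's optimization of the orbifold Chern form over all admissible orbifold structures on the blow-up. The extremal configuration is the symmetric one $a_i=a/n$, and a concavity/AM--GM argument for the Chern-class expression in the weights produces the clean upper bound $(1-a/2)^2$. The principal technical obstacle throughout is the identification of the correct orbifold structure realizing Langer's local Chern number: in the dominant case one must verify that the cyclic cover along $L_n$ is optimal, and in the balanced case one must carry the optimization far enough to isolate the stated estimate while keeping track of the boundary coefficient $a-1$ of $E$ and its interaction with the weights $(a_i)$ at the nodes $p_i$.
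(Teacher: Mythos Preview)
The paper does not prove this proposition; it is quoted verbatim from \cite[Theorem~8.7]{Langer} and used only as input for the computations in Section~2. So there is no proof in the paper against which to compare your attempt.

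On its own merits, your outline has the right architecture but stops short of a proof. The blow-up computation and the identification of the coefficient $a-1$ on $E$ are correct, and the case $a>2$ is indeed immediate from the convention that $e_{orb}$ vanishes when the pair fails to be log canonical. The two substantive cases, however, are only gestured at.

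For $2a_n\ge a$, your assertion that a ``cyclic cover adapted to $L_n$'' collapses the remaining $n-1$ branches into a single one with effective weight $a-a_n$, thereby reducing to the node formula, is not justified. The strict transforms $\tilde L_1,\dots,\tilde L_{n-1}$ meet $E$ at $n-1$ distinct points, and no cover of the kind you describe identifies them; you have neither specified the cover nor argued that it realizes the infimum defining $e_{orb}$ in Langer's sense. This is exactly the content of Langer's computation in \cite[\S8]{Langer}, and it requires real work with the local Chern classes on the blow-up, not a one-line reduction.

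For $2a_n\le a\le 2$, you invoke an optimization over orbifold structures and claim the extremum occurs at the symmetric configuration by ``concavity/AM--GM,'' but you never write down the function being optimized, let alone verify its concavity in the weights. You yourself flag both of these points as ``the principal technical obstacle,'' which is an accurate acknowledgement that the argument, as written, is a plan rather than a proof.
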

Now we would like to look at the case of curves in the complex projective plane. If $C$ is a reduced curve of degree $d$, then by \cite[p.~820]{BK} we have
$$e_{top}(C) = -d(d-3) + \sum_{p \in \text{Sing}(C)} \mu_{p},$$
where $\mu_{p}$ denotes the Milnor number of a singular point $p \in \text{Sing}(C)$.

It is easy to observe that if $(\mathbb{P}^{2}_{\mathbb{C}},\alpha C)$ is a log canonical pair for a suitable 
$\alpha$, then we can write (\ref{LMY}) as follows \cite[Section 11]{Langer}:
\begin{equation}
\label{Langer1}
\sum_{p \in \text{Sing}(C)} 3 \bigg( \alpha(\mu_{p}-1) + 1 - e_{orb}(p, \mathbb{P}^{2}_{\mathbb{C}}, \alpha C) \bigg) \leq (3\alpha -\alpha^{2})d^{2} - 3\alpha d.
\end{equation}

The main idea of this note is to provide some Hirzebruch-type inequalities for arrangements of curves in the complex projective plane. We will show that using Langer's variation on the Bogomolov-Miyaoka-Yau inequality \cite{Miyaoka} one can obtain rather elementary proofs of them, i.e., we do not need to pass to Hirzebruch's construction which involves abelian covers branched along arrangements of curves. It is worth pointing out that applied methods allow to deal with configurations of curves having different degrees of irreducible components in a much easier way than in Hirzebruch's construction, especially one can avoid very complicated conditions under which the resulting surfaces has non-negative Kodaira dimension, a crucial condition to apply the classical Bogomolov-Miyaoka-Yau inequality. Another motivation is to show that using Langer's approach one can obtain, somehow surprisingly, the so-called `quadratic right-hand side in Hirzebruch's inequality' for a large class of curve arrangements.
\section{Hirzebruch-type inequalities}
In this section, we assume that all curve arrangements $\mathcal{C} \subset \mathbb{P}^{2}_{\mathbb{C}}$ have ordinary singularities and every irreducible component of $\mathcal{C}$ is smooth. For a given arrangement of curves $\mathcal{C}$ we denote by $t_{r}$ the number of $r$-fold points, i.e., points where $r$-curves from the arrangement meet. Moreover, we define for $i \in \{0,1,2\}$ the following numbers
$$f_{i} = \sum_{r\geq 2} r^{i}t_{r},$$
and finally we will use the following elementary observations: $$ \sum_{p \in \text{Sing}(\mathcal{C})} m_{p} = f_{1}, \quad \sum_{p \in \text{Sing}(\mathcal{C})} m_{p}^{2} = f_{2},$$ where by $m_{p}$ we denote the multiplicity of $p \in \text{Sing}(C)$ -- in our situation this is equal to the number of curves passing through $p$.

Now we present the main results. The first one is devoted to line-conic arrangements in the complex projective plane and this result, according to the author's knowledge, is the first result providing some constraints on the combinatorics of such arrangements. Moreover, these arrangements seem to be interesting in the context of a generalized Terao's conjecture.  As we can see in \cite[Example 4.2]{Hal}, it is possible to find two configurations of lines and conics with ordinary singularities, which are combinatorially identical, but only one of them is free.
\begin{theorem}
Let $\mathcal{LC} = \{L_{1}, ..., L_{l}, C_{1}, ..., C_{k}\}$ be an arrangement of $l$ lines and $k$ conics such that $t_{r} = 0$ for $r > \frac{2(l+2k)}{3}$. Then one has
$$t_{2} + \frac{3}{4}t_{3} + (4k+2l-4)k \geq l + \sum_{r\geq 5}\bigg(\frac{r^{2}}{4}-r\bigg)t_{r}.$$
\end{theorem}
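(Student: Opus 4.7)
The plan is to apply Langer's inequality~(\ref{Langer1}) with the specific coefficient $\alpha = 3/d$, where $d = l + 2k$ is the total degree of $\mathcal{LC}$, and then translate the resulting numerical estimate into the stated form using B\'ezout's theorem.

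First, I verify the hypotheses of~(\ref{Langer1}). Every singular point of $\mathcal{LC}$ is an ordinary $r$-fold intersection of smooth transverse branches, which locally has log canonical threshold $2/r$; the standing assumption $t_r = 0$ for $r > 2d/3$ thus guarantees $r\alpha \leq 2$ for every present multiplicity, so $(\mathbb{P}^2_{\mathbb{C}},\alpha\mathcal{LC})$ is log canonical. Moreover $K_{\mathbb{P}^2} + \alpha\mathcal{LC} \equiv (\alpha d - 3)H \equiv 0$, hence is $\mathbb{Q}$-effective.

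Next, at each ordinary $r$-fold point Proposition~\ref{mult} applies with equal weights $\alpha$: one has $a = r\alpha \leq 2$ and $2a_n = 2\alpha \leq a$ for $r \geq 2$, so $e_{orb}(p;\mathbb{P}^2_{\mathbb{C}},\alpha\mathcal{LC}) \leq (1 - r\alpha/2)^2$. Combined with $\mu_p = (r-1)^2$ and the elementary identities $\sum_p m_p^i = f_i$ for $i=0,1,2$, summing in~(\ref{Langer1}) reduces it to
\[
3\alpha(f_2-f_1) - \frac{3\alpha^2}{4}f_2 \leq (3\alpha - \alpha^2)d^2 - 3\alpha d.
\]
On the combinatorial side, B\'ezout's theorem applied to the smooth irreducible components of $\mathcal{LC}$ gives $\tfrac{1}{2}(f_2 - f_1) = \sum_{i<j}\deg(C_i)\deg(C_j) = \binom{l}{2} + 2lk + 4\binom{k}{2}$, equivalently $f_2 - f_1 = d^2 - l - 4k$. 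Substituting $\alpha = 3/d$ together with this identity into the previous display collapses everything to the single estimate $f_2 \geq \tfrac{4dl}{3}$. Expanding $f_2 = \sum_{r\geq 2}r^2 t_r$ and eliminating $f_1$ via the same B\'ezout identity then rearranges $f_2 \geq \tfrac{4dl}{3}$ into precisely the stated inequality.

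The argument is a direct computation once the correct $\alpha$ is chosen, so the principal obstacle is recognising that $\alpha = 3/d$ is the right coefficient: it is forced simultaneously by the requirement that it be the largest value for which the pair remains log canonical (making the estimate of Proposition~\ref{mult} as tight as possible) and by the wish to render $K_{\mathbb{P}^2} + \alpha\mathcal{LC}$ numerically trivial, which is what causes the inequality to collapse cleanly when combined with the B\'ezout identity.
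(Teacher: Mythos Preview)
Your proof is correct and follows essentially the same approach as the paper: apply Langer's inequality~(\ref{Langer1}) together with the orbifold estimate of Proposition~\ref{mult} and the B\'ezout identity $f_2 - f_1 = d^2 - l - 4k$, then specialise to $\alpha = 3/d$; the paper carries a general $\alpha$ to the penultimate step before substituting, whereas you fix $\alpha$ at the outset and pass through the clean intermediate form $f_2 \geq 4dl/3$, but the content is identical. One small correction to your closing heuristic: $\alpha = 3/d$ is the \emph{smallest} admissible value (the threshold for $K_{\mathbb{P}^2}+\alpha C$ to be $\mathbb{Q}$-effective), not the largest value for which the pair is log canonical --- the latter would be $2/r_{\max}$.
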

\begin{proof}
For an arrangement $\mathcal{LC}$ let us denote by $C = L_{1} + ... + L_{l} + C_{1} + ... + C_{k}$ the associated divisor. First of all, we need to choose $\alpha$ in such a way that $K_{\mathbb{P}^{2}} + \alpha C$ is effective and log-canonical. Thus 
$$\frac{3}{\ell + 2k} \leq \alpha \leq \frac{2}{r_{max}},$$ where $r_{max}$ denotes the maximal possible multiplicity of singular points in $\mathcal{LC}$. This implies in particular that $r_{max} \leq \frac{2\ell + 4k}{3}$. Let us now choose $\alpha \in [ \frac{3}{\ell + 2k}, \frac{2}{r_{max}}]$. Our aim is to apply (\ref{Langer1}) in the above setting. We start with the left-hand side. Using the fact that for a singular point $p$ we have $\mu_{p} = (m_{p}-1)^{2}$ (see for instance \cite[Remark 2.8]{Ploski}), one has

$$L: \, \sum_{p \in \text{Sing}(\mathcal{LC})} 3 \bigg( \alpha(\mu_{p} - 1) + 1 - e_{orb}(p,\mathbb{P}^{2}, \alpha C) \bigg) = \sum_{p \in \text{Sing}(\mathcal{LC})} 3\bigg(\alpha (m_{p}^{2} - 2m_{p}) + 1 - e_{orb}(p,\mathbb{P}^{2}, \alpha C) \bigg) = $$
$$3\alpha (f_{2} - 2f_{1}) + 3f_{0} -3 \sum_{p \in \text{Sing}(\mathcal{LC})} e_{orb}(p,\mathbb{P}^{2}, \alpha C).$$
We need to use Proposition \ref{mult}. Since all $a_{1} = ... = a_{l+k} = \alpha$, if $p$ is a double point, then 
$$e_{orb}(p, \mathbb{P}^{2}, \alpha C) = (1 - \alpha)^{2},$$
and for points $p$ with multiplicities $3 \leq m_{p} = r \leq r_{max}$ one has
$$e_{orb}(p, \mathbb{P}^{2}, \alpha C) \leq \bigg(1 - \frac{\alpha r}{2} \bigg)^{2}.$$
This leads to
$$3\alpha (f_{2} - 2f_{1}) + 3f_{0} - 3\sum_{r\geq 2}t_{r} \bigg(1 - \frac{\alpha r}{2}\bigg)^{2} \leq 3\alpha (f_{2} - 2f_{1}) + 3f_{0} -3 \sum_{p \in \text{Sing}(\mathcal{LC})} e_{orb}(p,\mathbb{P}^{2}, \alpha C),$$
and finally we obtain
$$ 3\alpha f_{2} - 3 \alpha f_{1} - \frac{3}{4}\alpha^{2} f_{2} \leq 3\alpha (f_{2} - 2f_{1}) + 3f_{0} -3 \sum_{p \in \text{Sing}(\mathcal{LC})} e_{orb}(p,\mathbb{P}^{2}, \alpha C).$$
Let us come back to the right-hand side. First of all, observe that
$${ l \choose 2} + 4 { k \choose 2 } + 2kl = \sum_{r\geq 2}t_{r} {r \choose 2}$$
assuming also that ${0 \choose 2} = {1 \choose 2} = 0$. With $d = 2k + l$ we can rewrite the above combinatorial equality as
$$d^{2} = f_{2} - f_{1} + 4k + l.$$
Using this identity, we get
$$R: (3\alpha - \alpha^{2}) d^{2} - 3\alpha d = (3\alpha - \alpha^{2})(f_{2} - f_{1} +4k + l) - 3\alpha (2k+l) = 3\alpha f_{2} -3\alpha f_{1} -\alpha^{2} f_{2} - \alpha^{2} f_{1} + 6\alpha k - \alpha^{2}(4k+l).$$
Since $L\leq R$, we obtain
$$ 3\alpha f_{2} - 3 \alpha f_{1} - \frac{3}{4}\alpha^{2} f_{2}  \leq 3\alpha f_{2} -3\alpha f_{1} -\alpha^{2} f_{2} - \alpha^{2} f_{1} + 6\alpha k - \alpha^{2}(4k+l),$$
and this provides
$$ \bigg( \frac{6}{\alpha} - 4\bigg)k + t_{2} + \frac{3}{4}t_{3} \geq l + \sum_{r\geq 5}\bigg(\frac{r^{2}}{4} - r \bigg)t_{r}.$$
In particular, taking $\alpha^{-1} = (2k+l)/3$ one has
$$t_{2} + \frac{3}{4}t_{3} + (4k+2l-4)k \geq l + \sum_{r\geq 5}\bigg(\frac{r^{2}}{4}-r\bigg)t_{r},$$
which completes the proof.
\end{proof}
Now we consider the case when all components have the same degree. 
\begin{theorem}
\label{degree:d}
Let $\mathcal{C} = \{C_{1}, ..., C_{k}\} \subset \mathbb{P}^{2}_{\mathbb{C}}$ be an arrangement of $k$ curves such that all irreducible components have degree $d \geq 1$. Moreover, we assume that $t_{r} = 0$ for $r > \frac{2dk}{3}$. Then one has
$$t_{2} + \frac{3}{4}t_{3} + d^{2}k(dk-k-1) \geq \sum_{r\geq 5}\bigg(\frac{r^{2}}{4} - r\bigg)t_{r}.$$
\end{theorem}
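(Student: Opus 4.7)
The plan is to follow the template of the preceding proof almost verbatim, replacing only the combinatorial identity between the total degree and $f_1, f_2$ by the one appropriate when all components share a common degree $d$. Write $C = C_1 + \cdots + C_k$ for the associated divisor, of total degree $dk$. For $(\mathbb{P}^2_{\mathbb{C}}, \alpha C)$ to be log canonical with $K_{\mathbb{P}^2} + \alpha C$ effective I need
$$\frac{3}{dk} \leq \alpha \leq \frac{2}{r_{\max}},$$
and the hypothesis $r_{\max} \leq 2dk/3$ is precisely what keeps this interval nonempty.

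The left-hand side of (\ref{Langer1}) is then treated exactly as in the previous argument. Since every component carries the same weight $\alpha$, Proposition \ref{mult} supplies the uniform bound $e_{orb}(p, \mathbb{P}^2, \alpha C) \leq (1 - r\alpha/2)^2$ at an $r$-fold point (which is tight when $r = 2$); combined with $\mu_p = (m_p - 1)^2$ this yields
$$L \geq 3\alpha f_2 - 3\alpha f_1 - \tfrac{3}{4}\alpha^2 f_2,$$
identical in form to the line-conic case.

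The one substantive change is on the right, where I would replace the earlier ad hoc count by a single application of Bezout: any two distinct smooth components of degree $d$ meet in $d^2$ points, and ordinariness of the singularities guarantees that these intersections are transverse, so
$$\binom{k}{2}\, d^2 \;=\; \sum_{r \geq 2} t_r \binom{r}{2} \;=\; \tfrac{1}{2}\bigl(f_2 - f_1\bigr),$$
whence $(dk)^2 = f_2 - f_1 + d^2 k$. Substituting into the right-hand side of (\ref{Langer1}) gives
$$R = (3\alpha - \alpha^2)\bigl(f_2 - f_1 + d^2 k\bigr) - 3\alpha dk.$$

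Finally I would impose $L \leq R$, cancel the common $3\alpha f_2 - 3\alpha f_1$, divide through by $\alpha^2$, and regroup the $t_r$ through the coefficient $r^2/4 - r$; this isolates $t_2$ and $\tfrac{3}{4} t_3$ on the left, with $t_4$ dropping out for free because $r^2/4 - r$ vanishes at $r = 4$. Setting $\alpha = 3/(dk)$ (the minimal admissible value) then collapses the resulting bound to $d^2 k(dk - k - 1)$ and produces the statement of the theorem. In honesty there is no serious obstacle here: once the correct Bezout identity is installed, the proof is a mechanical adaptation of the preceding one, and the only moment of care is verifying that $\alpha = 3/(dk)$ is indeed the choice that makes the right-hand side take this clean closed form.
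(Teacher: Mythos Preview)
Your proposal is correct and follows essentially the same route as the paper's own proof: the same choice of admissible $\alpha$, the same lower bound $3\alpha f_2 - 3\alpha f_1 - \tfrac{3}{4}\alpha^2 f_2$ for the left-hand side via Proposition~\ref{mult}, the same B\'ezout identity $d^2\binom{k}{2} = \sum_r t_r\binom{r}{2}$ giving $(dk)^2 = f_2 - f_1 + d^2k$, and the same specialization $\alpha = 3/(dk)$ at the end. The paper additionally notes in passing that for $d\geq 2$ the constraint $r_{\max}\leq 2dk/3$ is automatic, but this is a side remark rather than a step in the argument.
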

\begin{proof}
For a given arrangement $\mathcal{C}$ let us denotes by $C = C_{1} + ... + C_{k}$ the associated divisor. First of all, we need to choose $\alpha$ in such a way that $K_{\mathbb{P}^{2}} + \alpha C$ is effective and log-canonical. Thus 
$$\frac{3}{dk} \leq \alpha \leq \frac{2}{r_{max}},$$ where $r_{max}$ denotes the maximal multiplicity of singular points in $\mathcal{C}$. This implies in particular that $r_{max} \leq \frac{2dk}{3}$. Observe that if $d\geq 2$, then there are no constraints on multiplicities of singular points. Let us now choose $\alpha \in [ \frac{3}{dk}, \frac{2}{r_{max}}]$.
The proof is analogical as in the previous case. Since the local orbifold Euler number is analytic in nature, thus the left-hand side has the following form:
$$L: \, 3\alpha f_{2} - 3 \alpha f_{1} - \frac{3}{4}\alpha^{2} f_{2}.$$
Now we focus on the right-hand side. Recall that we have the following combinatorial equality
$$d^{2} {k \choose 2} = \sum_{r\geq 2} t_{r} { r \choose 2}.$$
This leads to 
$$d^{2}k^{2} = f_{2}-f_{1} +d^{2}k$$
and
$$R: \, (3\alpha -\alpha^{2})(d^{2}k^{2}) - 3\alpha dk = (3\alpha - \alpha^{2})(f_{2}-f_{1}+d^{2}k) - 3\alpha dk = $$
$$3\alpha f_{2} - 3\alpha f_{1} -\alpha^{2} f_{2} + \alpha^{2}f_{1} - \alpha^{2}d^{2}k + 3\alpha d^{2}k - 3\alpha dk.$$ 
Using $L\leq R$ we get
$$3\alpha f_{2} - 3 \alpha f_{1} - \frac{3}{4}\alpha^{2} f_{2} \leq 3\alpha f_{2} - 3\alpha f_{1} -\alpha^{2} f_{2} + \alpha^{2}f_{1} - \alpha^{2}d^{2}k + 3\alpha dk(d - 1).$$
Finally 
$$t_{2} + \frac{3}{4}t_{3} + \frac{3}{\alpha}dk(d-1) \geq d^{2}k + \sum_{r\geq 5} \bigg( \frac{r^{2}}{4}-r\bigg)t_{r}.$$
In particular, taking $\alpha^{-1} = \frac{dk}{3}$ one has
$$t_{2} + \frac{3}{4}t_{3} + d^{2}k(dk-k-1) \geq \sum_{r\geq 5} \bigg( \frac{r^{2}}{4}-r\bigg)t_{r}.$$
\end{proof}
\begin{remark}
Let us emphasize that one can also apply Hirzebruch's construction to curve arrangements in the complex projective plane such that each irreducible component has degree $d\geq 2$. As it was shown in \cite[p.~8]{PRSz} for $d\geq 3$ and in \cite{LT} for $d=2$, if $\mathcal{C} = \{C_{1}, ..., C_{k}\} \subset \mathbb{P}^{2}_{\mathbb{C}}$ is a such configuration with $t_{k}=0$, then
$$\bigg( \frac{7}{2}d^{2}-\frac{9}{2}d \bigg)k + t_{2} + t_{3} \geq \sum_{r\geq 5}(r-4)t_{r}.$$ 
\end{remark}
\begin{remark}
The case $d=1$ was announced in \cite[Lemma 2.2]{Bojanowski} as a consequence of \cite[Proposition 11.3.1]{Langer}. On the other hand, this particular inequality is a special case of a much stronger result from the same thesis \cite[Theorem 2.3]{Bojanowski}.
\end{remark}

\begin{remark}
Now we focus on the case $d=1$. Let $\mathcal{L}$ be a line arrangement in the complex projective space such that $t_{r} = 0$ for $r > \frac{2}{3}k$, then we obtain
\begin{equation}
\label{LineLanger}
t_{2} + \frac{3}{4}t_{3} \geq k + \sum_{r\geq 5} \bigg( \frac{r^2}{4}-r \bigg) t_{r}.
\end{equation}
It is worth pointing out that this is the strongest known inequality for line arrangements with $t_{r} = 0$ for $r > \frac{2}{3}k$. Recall that for line arrangements with $k \geq 6$ lines and $t_{k} = t_{k-1} = t_{k-2} = 0$ Hirzebruch proved the following inequalities (see \cite{Hirzebruch} for the first and \cite{Hirzebruch1} for the second inequality):
\begin{itemize}
\item $t_{2} + t_{3} \geq k + \sum_{r\geq 5} (r-4)t_{r}$,
\item $t_{2} + \frac{3}{4}t_{3} \geq k + \sum_{r \geq 5} (2r-9)t_{r}$,
\end{itemize}
and finally we have the following sequence of inequalities:
$$t_{2} + t_{3} \geq t_{2} + \frac{3}{4}t_{3} \geq k + \sum_{r\geq 5} \bigg( \frac{r^{2}}{4}-r\bigg)t_{r} \geq k + \sum_{r\geq 5}(2r-9)t_{r} \geq k + \sum_{r\geq 5}(r-4)t_{r}.$$
\end{remark}

In order to emphasize the fact that (\ref{LineLanger}) is the strongest known inequality in the mentioned class of line arrangements, we recall \cite[Example 11.3.2]{Langer}. Below we present a list of particular examples \cite[p.~210]{BHH87} for which the equality in (\ref{LineLanger}) holds:
\begin{enumerate}
\item \emph{Icosahedron arrangement} consisting of $15$ lines and $t_{2} = 15, t_{3} = 10, t_{5}=6$,
\item \emph{Klein's arrangement} consisting of $21$ lines and $t_{3}=28, t_{4}=21$,
\item \emph{Fermat's arrangements} consisting of $3n$ lines with $n\geq 3$, and $t_{3} = n^{2}, t_{n}=3$,
\item \emph{Hesse arrangement} consisting of $12$ lines and $t_{4}=9, t_{2} = 12$,
\item \emph{extended Hesse arrangement} consisting of $21$ lines and $t_{2}=36, t_{4} = 9, t_{5}=12$,
\item \emph{Wiman's arrangement} consisting of $45$ lines and $t_{3}=120, t_{4}=45, t_{5}=36$.
\end{enumerate}
At the end of the note, let us present a very recent application of (\ref{LineLanger}).
\begin{remark}
In \cite{Zeye}, the author shows that if $\mathcal{P}=\{P_{1}, ... ,P_{n}\}$ is a finite set of mutually distinct points in the projective plane (not all of them all collinear) and $\mathcal{L}$ is the line arrangement determined by $\mathcal{P}$, then there exists a point $P \in \mathcal{P}$ such that its multiplicity is at least $\frac{n}{3}$. This question was known as a weak Dirac conjecture -- see \cite[Section~6]{Klee} for a nice introduction to the subject.
\end{remark}
\section*{Acknowledgement}
I am very grateful to Adrian Langer for stimulating conversations, useful comments about the content of the note, and for pointing out \cite{Bojanowski}. Finally, I would like to thank the anonymous referees for valuable comments that allowed to improve the note. The author is partially supported by National Science Centre Poland Grant 2014/15/N/ST1/02102.


\bigskip
   Piotr Pokora,
   Instytut Matematyki,
   Pedagogical University of Cracow,
   Podchor\c a\.zych 2,
   PL-30-084 Krak\'ow, Poland.

Current Address:
    Institut f\"ur Algebraische Geometrie,
    Leibniz Universit\"at Hannover,
    Welfengarten 1,
    D-30167 Hannover, Germany. \\
\nopagebreak
   \textit{E-mail address:} \texttt{piotrpkr@gmail.com, pokora@math.uni-hannover.de}


\end{document}